\documentclass{amsart}[12pt]
\parskip=12pt
\def\doctype{}

\usepackage{latexsym,amssymb,bm}
\usepackage{color}
\usepackage{dsfont}
\usepackage{fancyhdr}
\usepackage{subfig}
\usepackage{tikz}
\usepackage{hyperref}
\hypersetup{
colorlinks=true,
citecolor=blue,
linkcolor=blue,
  urlcolor=blue
}

\newcommand\Z{\mathbb{Z}}
\newcommand\Q{\mathbb{Q}}

\newcommand\bal{\mathrm{bal}}

\newcommand{\comment}[1]{}
\newcommand{\perm}[1]{{\small \mbox{\tt{#1}}}}

\numberwithin{equation}{section}

%% margin adjustment %%%%%%%%%%%%%%%%%%%%%%%%%%%%%%%%%%%%%%%%

\setlength\parindent{0pt}
\setlength{\textwidth}{6in}
\setlength{\oddsidemargin}{0.25in}
\setlength{\evensidemargin}{0.25in}
\setlength{\topmargin}{0in}
\setlength{\textheight}{8.6in}

%% tikz settings %%%%%%%%%%%%%%%%%%%%%%%%%%%%%%%%%%%%%%%%%%

\pgfdeclarelayer{edgelayer}
\pgfdeclarelayer{nodelayer}
\pgfsetlayers{edgelayer,nodelayer,main}
\tikzstyle{none}=[inner sep=0pt]
\tikzstyle{vertex}=[draw=black, line width=0.3mm, shape = circle, inner sep=2pt]
\tikzstyle{edge}=[draw=black, line width=0.3mm]

%% head/foot tweak %%%%%%%%%%%%%%%%%%%%%%%%%%%%%%%%%%%%%%%%

\pagestyle{fancy}
\fancyhead[C]{}
\fancyhead[R]{}
\fancyhead[L]{}
\cfoot{\vspace{5pt} \thepage}

\fancypagestyle{titlepage}{
\fancyhead[R]{\doctype}
\fancyhead[CL]{}
\cfoot{\vspace{5pt} \thepage}
}

%% section heading tweak %%%%%%%%%%%%%%%%%%%%%%%%%%%%%%%%%%%%%%

\let\oldsection\section
\newcommand\boldsection[1]{\oldsection{\bf #1}}
\newcommand\starsection[1]{\oldsection*{\bf #1}}
\makeatletter
\renewcommand\section{\@ifstar\starsection\boldsection}
\makeatother

%% theorem spacing/naming tweak %%%%%%%%%%%%%%%%%%%%%%%%%%%%%%%%%%

\newtheoremstyle{theorem}
  {12pt}		  % space above
  {0pt}  % space below
  {\sl}  % bofy font
  {\parindent}     % ident - empty=no indent,  \parindent= paragraph indent
  {\bf}  % thm head font
  {. }    % punctuation after thm head
  { }    % space after thm head: `` ``=normal \newline=linebreak
  {}     % thm head specification
\theoremstyle{theorem}
\newtheorem{thm}{Theorem}[section]  % 1st argument is your name for it
\newtheorem{lemma}[thm]{Lemma}     % 2nd argument is what is printed

\newtheorem{prop}[thm]{Proposition}

\newtheoremstyle{definition}
  {12pt}		  % space above
  {0pt}  % space below
  {}  % bofy font
  {\parindent}     % ident - empty=no indent,  \parindent= paragraph indent
  {\bf}  % thm head font
  {. }    % punctuation after thm head
  { }    % space after thm head: `` ``=normal \newline=linebreak
  {}     % thm head specification
\theoremstyle{definition}

\newtheorem{ex}[thm]{Example}

\renewcommand{\proofname}{Proof}

\makeatletter
\renewenvironment{proof}[1][\proofname]{\par
  \pushQED{\qed}%
  \normalfont \partopsep=\z@skip \topsep=\z@skip
  \trivlist
  \item[\hskip\labelsep
        \scshape
    #1\@addpunct{.}]\ignorespaces
}{%
  \popQED\endtrivlist\@endpefalse
}
\makeatother

%% date placement tweak %%%%%%%%%%%%%%%%%%%%%%%%%%%%%%%%%%%%%

\makeatletter
\renewcommand*\@maketitle{%
  \normalfont\normalsize
  \@adminfootnotes
  \@mkboth{\@nx\shortauthors}{\@nx\shorttitle}%
  \global\topskip42\p@\relax % 5.5pc   "   "   "     "     "
  \@settitle
  \ifx\@empty\authors \else {\vskip 1em
\vtop{\centering\shortauthors\@@par}} \fi
  \ifx\@empty\@date \else {\vskip 1em \vtop{\centering\@date\@@par}}\fi % MY CHANGE
  \ifx\@empty\@dedicatory
  \else
    \baselineskip18\p@
    \vtop{\centering{\footnotesize\itshape\@dedicatory\@@par}%
      \global\dimen@i\prevdepth}\prevdepth\dimen@i
  \fi
  \@setabstract
  \normalsize
  \if@titlepage
    \newpage
  \else
    \dimen@34\p@ \advance\dimen@-\baselineskip
    \vskip\dimen@\relax
  \fi
} % end \@maketitle
\renewcommand*\@adminfootnotes{%
  \let\@makefnmark\relax  \let\@thefnmark\relax
%  \ifx\@empty\@date\else \@footnotetext{\@setdate}\fi% MY CHANGE
  \ifx\@empty\@subjclass\else \@footnotetext{\@setsubjclass}\fi
  \ifx\@empty\@keywords\else \@footnotetext{\@setkeywords}\fi
  \ifx\@empty\thankses\else \@footnotetext{%
    \def\par{\let\par\@par}\@setthanks}%
  \fi
\thispagestyle{titlepage}
}
\makeatother

%%%%%%%%%%%%%%%%%%%%%%%%%%%%%%%%%%%%%%%%%%%%%%%%%%%
%%%%%%%%%%%%%%%%%%%%%%%%%%%%%%%%%%%%%%%%%%%%%%%%%%%

\begin{document}

\title[Balancing over the alternating group]{\large The balancing index over the alternating group}

\author{Peter J.~Dukes}

%\email{dukes@uvic.ca}

\author{Georgia Penner}

%\email{dukes@uvic.ca, georgiap@uvic.ca}
\address{
Department of Mathematics and Statistics,
University of Victoria, Victoria, Canada
}

\thanks{This research is supported by NSERC grant RGPIN-2024-03966}

\date{\today}

\begin{abstract}
The balancing index of a polynomial $f \in \Z[x_1,\dots,x_n]$ is the least positive sum of coefficients in an integer linear combination of permuted copies of $f$ which produces a symmetric polynomial.  Here we consider the restricted problem in which only even permutations are used.  
\end{abstract}

\maketitle
\hrule

\bigskip

\section{Introduction}
\label{sec-intro}

Let $\Z[S_n]$ act on the ring of polynomials $\Z[x_1,\dots,x_n]$ in the natural way by permuting variables.  In particular, for $\sigma \in S_n$ and $f \in \Z[x_1,\dots,x_n]$, 
\begin{equation}
\label{eq:permute}
\sigma(f)(x_1,\dots,x_n) = f(x_{\sigma(1)},\dots,x_{\sigma(n)}).
\end{equation}
We extend \eqref{eq:permute} by linearity for the action under the group ring. We say that $f$ is \emph{symmetric} if $\sigma(f)=f$ for all $\sigma \in S_n$. Define the \emph{balancing index} $\bal(f)$ of $f$ to be 
\begin{equation}
\label{eq:bal}
\bal(f) = \gcd\{\alpha(1): \alpha \in \Z[S_n] \text{ with } \alpha(f) \text{ symmetric}\}.
\end{equation}
That is, this is the least positive sum of coefficients in an integer linear combination of permuted copies of $f$ which lands in the ring of symmetric polynomials $\Z[x_1,\dots,x_n]^{S_n}$. Since $\sum_{\sigma \in S_n} \sigma(f)$ is symmetric for any $f$, it follows that \eqref{eq:bal} is well-defined with $\bal(f) \mid n!$.

The case where $f$ is a squarefree quadratic is related to a problem on edge-decompositions of graphs.  A graph $G$ on vertex set $[n]:=\{1,\dots,n\}$ can be associated with the quadratic
$$f_G=\sum_{ij \in E(G)} x_i x_j.$$
The balancing index of $f_G$ is considered in \cite{dVD}, where it is abbreviated simply $\bal(G)$.  This has combinatorial motivation \cite{CC,Wilson75} because knowing $\bal(G)$ informs which sufficiently large multiples of the complete graph 
$\lambda K_n$ admit an edge-decomposition into copies of $G$.  The papers \cite{dVD,WW} show, with two different methods, how to compute $\bal(G)$ as a function of the degree sequence of $G$.  More generally, square matrices were considered in \cite{dVD} as edge-weighted directed graphs. It was shown that the balancing index depends only on some `local' information, such as row and column sums, diagonal entries, and other data involving small alternating expressions over $S_3$ and the Klein 4-group.  Even though our variables $x_1,\dots,x_n$ are assumed to commute, we can model directions with different exponents in two-variable monomials.  In more detail, for a matrix $A \in \Z^{n \times n}$, it is possible to reformulate its balancing index in the polynomial setting as $\bal(A):=\bal(f_A)$, where
\begin{equation}
\label{eq:matrix-poly}
f_A = \sum_{i,j=1}^n A_{ij} x_i^2 x_j.
\end{equation}

Given a subgroup $\Gamma$ of $S_n$ and $f \in \Z[x_1,\dots,x_n]$, define $\bal(f,\Gamma)$ to equal the $\gcd$ of $\alpha(1)$ over all $\alpha \in \Z[\Gamma]$ with $\alpha(f)$ is symmetric.  When there are no such $\alpha$, we define $\bal(f,\Gamma) = 0$.  It is an easy observation that $\bal(f)$ divides $\bal(f,\Gamma)$.

Note that, for proper subgroups $\Gamma$, it could be the case that $\bal(f,\Gamma)$ depends on the ordering of the variables. In particular, for a graph $G$, if we let $\bal(G,\Gamma)$ denote the balancing index for the squarefree quadratic $f_G$, this quantity may depend on the labelling of the vertices of $G$.  However, since the alternating group $A_n$ is normal in $S_n$, the balancing index over $A_n$ is, in fact, invariant under permutation of variables/vertices.  To see this, suppose $\sum_{\sigma \in A_n} c_\sigma \sigma(f) = g$, with $g$ a symmetric polynomial.  Then for any $\alpha \in S_n$, we can re-index the sum using conjugation to get 
$\sum_{\sigma \in A_n} c_{\alpha^{-1} \sigma \alpha} \sigma(\alpha f) = \alpha g = g.$  This motivates studying the (integer) ratio
$\bal(f,A_n)/\bal(f)$, which we denote by $R(f)$.

Our main goal in this note is to obtain upper bounds on $R(f)$.
We give three results for special classes of polynomials.  The first two generalize the situation for hypergraphs and symmetric matrices, while the third generalizes that for arbitrary matrices.

\begin{thm}
\label{thm:main3}
Suppose $f \in \Z[x_1,\dots,x_n]$ is squarefree with $n \ge 3$.  Then $R(f) \mid 3$.
%\bal(f,A_n)$ equals either $\bal(f)$ or $3\, \bal(f)$.
\end{thm}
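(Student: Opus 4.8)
The plan is to take a most efficient $S_n$-symmetrization of $f$ and convert it, at the cost of a factor $3$, into an $A_n$-symmetrization. The set $\{\alpha(1)\colon \alpha\in\Z[S_n],\ \alpha(f)\text{ symmetric}\}$ is a subgroup of $\Z$, hence equals $\bal(f)\Z$; fix $\alpha\in\Z[S_n]$ with $\alpha(f)$ symmetric and $\alpha(1)=\bal(f)=:b$, and split $\alpha=\alpha^{+}+\sum_{\tau}c_\tau\tau$, where $\alpha^{+}\in\Z[A_n]$ and the sum runs over odd permutations $\tau$. It then suffices to prove: for every odd $\tau\in S_n$ there is $\gamma_\tau\in\Z[A_n]$ with $\gamma_\tau(1)=3$ and $\gamma_\tau(f)-3\tau(f)$ symmetric. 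Granting this, $\beta:=3\alpha^{+}+\sum_\tau c_\tau\gamma_\tau\in\Z[A_n]$ has $\beta(1)=3\bigl(\alpha^{+}(1)+\sum_\tau c_\tau\bigr)=3\alpha(1)=3b$, and $\beta(f)$ differs from $3\alpha^{+}(f)+3\sum_\tau c_\tau\tau(f)=3\alpha(f)$ by a symmetric polynomial, so $\beta(f)$ is symmetric; hence $\bal(f,A_n)\mid 3b$, and with $\bal(f)\mid\bal(f,A_n)$ this yields $R(f)=\bal(f,A_n)/\bal(f)\mid 3$.

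First I would record a lemma that also disposes of $n=3$: for squarefree $f$ with $n\ge 3$ the element $N:=\sum_{\sigma\in A_n}\sigma$ has $N(f)$ symmetric. By linearity check this on a squarefree monomial $m_S=\prod_{i\in S}x_i$: for $|S|\in\{0,n\}$ it is immediate, and for $1\le|S|\le n-1$ the stabilizer of $S$ (of type $S_{|S|}\times S_{n-|S|}$) contains an odd permutation, since $n\ge 3$ forces one of $|S|$, $n-|S|$ to be at least $2$; so exactly half the permutations carrying $S$ to a prescribed $|S|$-set are even, and $N(m_S)=\tfrac12\sum_{\sigma\in S_n}\sigma(m_S)$ is a multiple of the elementary symmetric polynomial $e_{|S|}$. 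For $n=3$ this finishes the proof: $N\in\Z[A_3]$, $N(f)$ is symmetric and $N(1)=3$, so $\bal(f,A_3)\mid 3$, whence $\bal(f)\mid\bal(f,A_3)\mid 3$ and $R(f)\mid 3$ — equivalently one may take every $\gamma_\tau=N$.

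For $n\ge 4$, write $f=\sum_k f_k$ with $f_k$ homogeneous squarefree of degree $k$. As a polynomial is symmetric iff each homogeneous part is, and the symmetric degree-$k$ squarefree polynomials are exactly $\Z e_k$, the claim asks for one $\gamma=\gamma_\tau\in\Z[A_n]$ with $\gamma(1)=3$ and $\gamma(f_k)-3\tau(f_k)\in\Z e_k$ for all $k$ (degrees $0,n$ automatic). Let $P_k$ be the permutation module on $k$-subsets, $M_k:=P_k/\Z\one$ ($\one$ the all-ones vector, i.e.\ the coefficient vector of $e_k$), write $\overline{f_k}\in M_k$, and let $I$ be the augmentation ideal of $\Z[A_n]$; the requirement becomes $3(\tau-\mathrm{id})\overline{f_k}\in I\overline{f_k}$ in each $M_k$, realized by one common $\delta\in I$ (then $\gamma=3\,\mathrm{id}+\delta$). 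I would handle one degree at a time. For a single monomial no factor $3$ is needed: since $n\ge 3$ and $1\le|S|\le n-1$, $A_n$ is transitive on $|S|$-subsets, so some $\rho\in A_n$ has $\rho(S)=\tau(S)$ and thus $\tau(m_S)=\rho(m_S)$. For a genuine combination $f_k=\sum_S d_S m_S$ this can fail (already at $n=3$: take $f=x_1+2x_2$), and the factor $3$ is exactly what repairs it. Writing $\tau=(a\,b)$ and using $n\ge 4$ to pick further points $c,d$, approximate $\tau$ by the even permutation $\sigma_0:=(a\,b)(c\,d)$; the discrepancy $\tau(f_k)-\sigma_0(f_k)$ splits into terms supported on pairs of $k$-subsets and weighted by coefficient differences $d_S-d_{S'}$ of $f_k$, and each such term, once multiplied by $3$, can be absorbed into $I\overline{f_k}$ by means of $3$-cycles through $\{a,b,c,d\}$ (terms $\rho-\mathrm{id}$, $\rho\in A_n$) — the multiplier $3$ being the index-$3$ obstruction already seen at $n=3$. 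One must finally check that a single $\delta$ can be chosen to work across all degrees at once.

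The reduction and the single-monomial case are routine; the heart of the argument — and the source of the ``$3$'' — is this last step for $n\ge 4$: proving $3(\tau-\mathrm{id})\overline{f_k}\in I\overline{f_k}$ in every $M_k$ with $1\le k\le n-1$, uniformly over $k$. The tidiest route I foresee is an explicit $3$-cycle computation, using that a product of two transpositions sharing a point is a $3$-cycle and that in a copy of $S_3$ the norm elements $N_3=\sum_{\sigma\in A_3}\sigma$ and $T_3=\sum_{\tau\text{ odd}}\tau$ satisfy $N_3^{\,2}=T_3^{\,2}=3N_3$ and $N_3T_3=3T_3$, which is where the scalar $3$ surfaces. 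The hazard to guard against is picking up a stray extra factor from a lattice index; excluding it is what pins the bound down to exactly $3$ rather than to some larger divisor of $n!$.
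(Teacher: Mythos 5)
Your reduction and your $n=3$ argument are sound: the observation that $N=\sum_{\sigma\in A_n}\sigma$ symmetrizes every squarefree $f$ (because the setwise stabilizer of any $S$ with $1\le |S|\le n-1$ contains an odd permutation, so each coset $\{\sigma:\sigma(S)=T\}$ is half even, half odd) is correct and settles $n=3$. But for $n\ge 4$ the proof is not there. The entire content of the theorem is your claim that for each odd $\tau$ there is $\gamma_\tau\in\Z[A_n]$ with $\gamma_\tau(1)=3$ and $\gamma_\tau(f)-3\tau(f)$ symmetric, and for this you offer only a plan: ``approximate $\tau$ by $(a\,b)(c\,d)$ and absorb the discrepancy, times $3$, into $I\overline{f_k}$ via $3$-cycles,'' followed by three self-acknowledged open sub-steps (the actual $3$-cycle computation, the existence of a single $\delta$ working simultaneously for all degrees $k$, and the exclusion of a stray lattice-index factor). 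None of these is carried out, and the last two are genuine issues with the architecture you chose: producing a $\delta_k$ separately in each $M_k$ does not yield one $\gamma$ for $f=\sum_k f_k$, and the module-theoretic route gives you no a priori control on which multiple of $(\tau-\mathrm{id})\overline{f_k}$ lands in $I\overline{f_k}$.

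For comparison, the paper avoids all per-$\tau$, per-degree, and per-$f$ analysis with one universal identity: the element $\nu=\perm{()}+\perm{(123)}+\perm{(132)}-\perm{(12)}-\perm{(13)}-\perm{(23)}$ annihilates every squarefree monomial $m_A$, $A\subseteq[n]$, for every $n\ge 3$ (a ``null $n$-design''), proved by exactly the same even/odd coset-counting you already used for your $n=3$ lemma, applied to $\Pi_{A,B}=\{\sigma\in S_3:\sigma(A)=B\}$. Writing $\nu=\nu_0+\nu_1$ (even and odd parts) and $\alpha=\alpha_0+\alpha_1$, the element $\beta=\nu_0\alpha_0-\nu_1\alpha_1\in\Z[A_n]$ satisfies $\beta(f)=\nu_0(\alpha(f))=3\alpha(f)$ and $\beta(1)=3\alpha(1)$, which is the whole proof. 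Note this also shows your intermediate goal is stronger than necessary: one does not need $\gamma_\tau(f)-3\tau(f)$ itself to be symmetric for each $\tau$; it is enough that the even replacement agrees with $3\tau$ after summing against $\alpha$, i.e.\ that $\nu_0(g)=-\nu_1(g)$ on squarefree $g$. As written, your argument proves the theorem only for $n=3$.
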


\begin{thm}
\label{thm:main1}
Suppose $f \in \Z[x_1,\dots,x_n]$ is squarefree with with $n \ge 2 \deg(f)+2$.
Then $R(f)=1$.
\end{thm}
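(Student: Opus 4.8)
The plan is to establish the nontrivial divisibility $\bal(f,A_n)\mid\bal(f)$; combined with $\bal(f)\mid\bal(f,A_n)$ this gives $R(f)=1$. Write $d=\deg f$. Since $\bal(f,A_n)$ is the positive generator of the subgroup $\{\beta(1):\beta\in\Z[A_n],\ \beta(f)\text{ symmetric}\}$ of $\Z$, and $\bal(f)$ is the positive generator of the same subgroup with $S_n$ in place of $A_n$, it suffices to show: every $\alpha\in\Z[S_n]$ with $\alpha(f)$ symmetric can be replaced by some $\beta\in\Z[A_n]$ with $\beta(f)$ symmetric and $\beta(1)=\alpha(1)$. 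I would do this term by term in $\alpha=\sum_{\pi\in S_n}a_\pi\pi$: it is enough to produce, for each odd $\pi$, an element $\phi_\pi\in\Z[A_n]$ with $\phi_\pi(1)=1$ and $\phi_\pi(f)-\pi(f)$ symmetric, for then $\beta:=\sum_{\pi\text{ even}}a_\pi\pi+\sum_{\pi\text{ odd}}a_\pi\phi_\pi$ lies in $\Z[A_n]$, has weight $\alpha(1)$, and has $\beta(f)-\alpha(f)$ symmetric.

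Fixing a transposition $\tau$ and writing an odd $\pi$ as $\pi=\tau\rho$ with $\rho\in A_n$, I would look for $\phi_\pi$ in the form $(\mu+e)\rho$ with $\mu\in\Z[A_n]$; since then $\phi_\pi(f)=\mu(\rho f)+\rho f$, the whole problem reduces to the following, applied with $h=\rho(f)$: \emph{Claim: if $h\in\Z[x_1,\dots,x_n]$ is squarefree of degree $\le d$ and $n\ge 2d+2$, then for every transposition $\tau=(a\,b)$ there is $\mu\in\Z[A_n]$ with $\mu(1)=0$ and $\tau(h)-h-\mu(h)$ symmetric.}

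The Claim has an easy case. If the setwise stabilizer $\mathrm{Stab}(h)\le S_n$ contains an odd permutation $\pi_0$ — for instance whenever $h$ uses at most $n-2$ of the variables, since then a transposition of two unused indices fixes $h$ — then $\phi:=\tau\pi_0^{-1}\in A_n$ has $\phi(h)=\tau(h)$, so $\mu:=\phi-e$ works. The real content is the opposite case, where $h$ uses (nearly) all variables and has only even symmetries; then no single even permutation mimics $\tau$ on $h$, and $\mu$ must be a genuine integer combination. This is where $n\ge 2d+2$ enters. Over $\Q$ the Claim is immediate from representation theory: the $S_n$-module generated by $f$ has its irreducible constituents among the two-row Specht modules $S^{(n-i,i)}$, $0\le i\le d$, none of which is self-conjugate once $n\ge 2d+2$; hence each restricts irreducibly to $A_n$, the $\Q[A_n]$- and $\Q[S_n]$-submodules generated by $f$ coincide (in particular $\sum_{\sigma\in S_n}\mathrm{sgn}(\sigma)\,\sigma(f)=0$), and so $\tau(h)-h$ lies in the $\Q[A_n]$-span of $h$ modulo the symmetric polynomials. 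To make this integral, I would write $\tau(h)-h=\sum_R d_R\bigl(x_{\{b\}\cup R}-x_{\{a\}\cup R}\bigr)$, summed over $R\subseteq[n]\setminus\{a,b\}$ with $|R|\le d-1$ and $d_R$ a difference of two coefficients of $h$ (here $x_S$ denotes the squarefree monomial on the index set $S$), use the identity $x_{\{b\}\cup R}-x_{\{a\}\cup R}=\bigl((a\,b\,c)-e\bigr)(x_{\{a\}\cup R})$ valid for any spare index $c\notin\{a,b\}\cup R$, and build $\mu$ from such $3$-cycles. The point of having at least $n-2d\ge 2$ spare indices is to average the construction over the choices of $c$ (and over the auxiliary indices the $3$-cycles drag through the other monomials of $h$) so that every off-target ``interference'' contribution collects, degree by degree, into a multiple of an elementary symmetric polynomial and hence vanishes modulo symmetric functions.

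I expect the hard part to be exactly this last step: arranging the interference to cancel uniformly over all degrees $\le d$ simultaneously, since a single $\mu\in\Z[A_n]$ acts on every homogeneous piece of $h$ at once. This is presumably also where the threshold $2d+2$ comes from and why one cannot in general go below it — consistent with the residual factor of $3$ that Theorem~\ref{thm:main3} still allows for smaller $n$.
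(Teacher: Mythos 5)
Your reduction is sound as far as it goes: it is enough to replace each odd $\pi$ by some $\phi_\pi\in\Z[A_n]$ of weight $1$ with $\phi_\pi(f)-\pi(f)$ symmetric, and your representation-theoretic argument correctly shows the rational version of the Claim (a cyclic $\Q[S_n]$-module is multiplicity-free, its constituents here are two-row Specht modules $S^{(n-i,i)}$ with $i\le\deg f$, none self-conjugate, so $\Q[A_n]f=\Q[S_n]f$). But the theorem is an integrality statement, and the integral version of your Claim is exactly what you do not prove. Your proposed mechanism --- writing $\tau(h)-h$ as a sum of terms $((a\,b\,c)-e)(x_{\{a\}\cup R})$ and ``averaging over the choices of $c$'' so that the off-target contributions assemble into symmetric polynomials --- is where the entire difficulty lives, and averaging over the $n-2d\ge 2$ spare indices naively divides by the number of choices, which is precisely the kind of denominator you cannot afford. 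You acknowledge this yourself (``I expect the hard part to be exactly this last step''), so the proposal is an honest but incomplete sketch rather than a proof; moreover, nothing in the sketch isolates why the answer is $1$ rather than some other divisor of $n!$, which is the actual content of the statement.

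For contrast, the paper avoids all of this with a two-line gcd argument built on Proposition~\ref{prop:sqfree}: if $\nu=\nu_0+\nu_1$ is a null $t$-design (it kills every squarefree monomial of degree $\le t$), then $\beta=\nu_0\alpha_0-\nu_1\alpha_1\in\Z[A_n]$ satisfies $\beta(f)=\nu_0(1)\,\alpha(f)$, giving $R(f)\mid\nu_0(1)$. Taking $\nu=\mu^{t+1}$, a product of $t+1$ disjointly supported factors $\perm{()}-\perm{(ij)}$ (possible exactly when $n\ge 2t+2$, which is where the hypothesis enters --- not through self-conjugacy of partitions), one gets $\nu_0(1)=2^t$, hence $R(f)\mid 2^t$; combined with $R(f)\mid 3$ from Theorem~\ref{thm:main3} this forces $R(f)=1$. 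Note that this does \emph{not} produce a single $\beta$ with $\beta(1)=\alpha(1)$ for each $\alpha$; it only produces elements of weights $2^t\alpha(1)$ and $3\alpha(1)$ in the subgroup $\{\beta(1):\beta\in\Z[A_n],\ \beta(f)\ \text{symmetric}\}$, which suffices because $\bal$ is defined as a gcd. Your plan insists on the stronger per-element replacement, which is more than is needed and is what makes your route hard. If you want to salvage your approach, the fastest fix is to prove your Claim with ``$\mu(1)=0$'' relaxed to ``$\mu(1)\equiv 0$ modulo a set of integers with gcd $1$,'' at which point you have essentially rediscovered the null-design argument.
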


\begin{thm}
\label{thm:main62}
Suppose $f \in \Z[x_1,\dots,x_n]$ has at most two variables in each monomial term.  Then 
$R(f)$ divides $6$ when $n=5$, and $R(f)$ divides $2$ when $n \ge 6$.
\end{thm}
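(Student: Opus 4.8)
The plan is to analyze the quotient $Q = \Z[S_n]/\Z[A_n]$-module structure of the "defect" that measures failure of a combination to be symmetric. Write $M$ for the $\Z[S_n]$-submodule of $\Z[x_1,\dots,x_n]$ generated by $f$, and let $N \subseteq M$ be the submodule of symmetric polynomials lying in $M$ (which is just $(\sum_{\sigma}\sigma)\cdot M$ scaled appropriately). The key observation is that $\bal(f)$ is the generator of the ideal $\{\alpha(1) : \alpha(f) \in N, \alpha \in \Z[S_n]\}$, and $\bal(f,A_n)$ is the same with $\alpha$ restricted to $\Z[A_n]$. So $R(f)$ is controlled by how a "symmetrizing augmentation over $S_n$" can be written in terms of one over $A_n$ plus something in the augmentation-zero part.

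**Reduction to a single monomial orbit.**

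First I would reduce to the case where $f$ is a single monomial $x_{i_1}x_{i_2}$ or $x_i^2 x_j$ (a "digraph monomial"), or a sum supported on one $S_n$-orbit, using the fact from the earlier discussion (the matrix case of \cite{dVD}) that $\bal$ is governed by local data. Concretely, a polynomial with at most two variables per term decomposes as $f = c + \ell + \sum A_{ij} x_i^2 x_j + \sum B_{ij} x_i x_j + (\text{pure powers})$ where $\ell$ is linear; the constant, linear, and pure-power parts are already symmetric or trivially handled, so the content is in the quadratic part encoded by matrices $A$ (for $x_i^2x_j$, $i\neq j$) and $B$ (symmetric, for $x_ix_j$). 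Then I would invoke the structure: over $S_n$, $\bal(A)$ depends on row sums, column sums, diagonal-type data, and small $S_3$/Klein-four alternating expressions; the goal is to show that restricting to $A_n$ costs only a factor dividing $6$ (resp. $2$).

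**The core computation: recovering odd permutations.**

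The heart of the argument is: given that some $\alpha \in \Z[S_n]$ with $\alpha(1) = \bal(f)$ symmetrizes $f$, produce $\beta \in \Z[A_n]$ with $\beta(f)$ symmetric and $\beta(1)$ dividing $6\,\bal(f)$ (for $n=5$) or $2\,\bal(f)$ (for $n \ge 6$). The trick is that for $n \ge 6$ one has enough room: any transposition $\tau$ satisfies $2\tau \equiv \tau + \tau' \pmod{\Z[A_n]\text{-symmetrizers}}$ in a suitable sense — more precisely, I would use that $x_i x_j$ (and $x_i^2 x_j$) monomials, when $n \ge 6$, can have their orbit sums under $A_n$ already equal to the full $S_n$-orbit sum because $A_n$ acts transitively on ordered pairs (and ordered triples) when $n \ge 4$; the obstruction is purely in the "augmentation = odd total" direction, killed by doubling. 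For $n = 5$, $A_5$ is still $2$-transitive and $3$-transitive on the relevant tuples, but the simplicity of $A_5$ and its exceptional Schur-multiplier-type behavior forces an extra factor dividing $3$, giving $6$. I would make this precise by exhibiting, for each orbit type, an explicit element of $\Z[A_n]$ of augmentation $1$, $2$, $3$, or $6$ that matches the effect of a known $\Z[S_n]$-symmetrizer on $f$.

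**Main obstacle.**

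The hard part will be the $n = 5$ case and extracting exactly the factor $6 = 2 \cdot 3$: showing the $2$ is necessary/sufficient (parity of augmentation) is routine, but pinning the $3$ requires a careful analysis of how $A_5$-combinations of a monomial like $x_1^2 x_2$ fail to reach certain symmetric targets that $S_5$-combinations reach — essentially a rank computation of an integer matrix (the "balancing matrix" restricted to $A_5$) modulo small primes, where $3$ appears because $|A_5| = 60 = 2^2 \cdot 3 \cdot 5$ and the $5$-part and $2^2$-part are already handled by transitivity while the $3$-part is the genuine residue. I expect this to reduce, after the orbit reduction, to checking a bounded number of explicit small cases (monomials in at most $5$ variables), so the obstacle is organizational rather than deep: identifying the minimal set of "test polynomials" whose behavior certifies the bound for all $f$.
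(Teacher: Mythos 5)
Your proposal is a plan rather than a proof: the two quantitative claims that carry all the content --- that for $n\ge 6$ the obstruction is ``killed by doubling,'' and that $n=5$ contributes an extra factor dividing $3$ --- are asserted on heuristic grounds (transitivity of $A_n$ on ordered pairs, ``simplicity of $A_5$ and its exceptional Schur-multiplier-type behavior'') and never established. Transitivity of $A_n$ on ordered pairs only tells you that the full $A_n$-orbit sum of a monomial is symmetric; it does not produce, from an arbitrary $\alpha\in\Z[S_n]$ with $\alpha(f)$ symmetric and $\alpha(1)=\bal(f)$, an element of $\Z[A_n]$ of augmentation $2\,\bal(f)$ that also symmetrizes $f$. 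That step is exactly where the work lies. The paper does it by exhibiting a single explicit $\nu\in\Z[S_n]$ (a ``null ordered $2$-design'') that annihilates every monomial in at most two variables, and then forming $\beta=\nu_0\alpha_0-\nu_1\alpha_1\in\Z[A_n]$, where subscripts denote even and odd parts; one computes $\beta(f)=\nu_0(1)\,\alpha(f)$ and $\beta(1)=\nu_0(1)\,\alpha(1)$, so $R(f)\mid \nu_0(1)$. The constants $6$ and $2$ are then just $\nu_0(1)$ for two concrete designs, namely $\bigl(\sum_{\sigma\in S_3}\mathrm{sgn}(\sigma)\sigma\bigr)\bigl(1-\perm{(45)}\bigr)$ in $\Z[S_5]$ and $\nu\bigl(1-\perm{(56)}\bigr)$ in $\Z[S_6]$, where $\nu$ is a six-term null $1$-design on $\{1,2,3,4\}$ whose even part has augmentation $1$. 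Nothing in your sketch supplies a substitute for these explicit gadgets; you defer them to ``a bounded number of explicit small cases'' without producing any.

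A second concrete problem is the reduction to a single monomial orbit. To bound $\bal(f,A_n)$ you need one element $\beta\in\Z[A_n]$ of small augmentation that symmetrizes all orbit components of $f$ simultaneously; the gcd of augmentations of elements symmetrizing a sum of orbit pieces is not determined by the corresponding gcds for the pieces separately, so bounding $R$ orbit by orbit does not bound $R(f)$. (The paper's construction sidesteps this because the same $\nu$ annihilates every monomial with at most two variables at once.) Finally, the $n=5$ numerology points in the wrong direction: the bound there is $6=2\cdot 3$, so the factor $2$ is not ``already handled,'' and the Schur multiplier of $A_5$ is the generic $\Z/2$ (it is $A_6$ and $A_7$ that are exceptional); neither observation leads toward the factor $3$, which in the paper simply comes from the augmentation of the even part of the alternating sum over $S_3$.
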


The proofs of these results are all straightforward consequences of the existence of certain elements of $\Z[S_n]$ which we call `null designs'.  These are permutation analogues of similar objects for block designs.  In the next section, we set up the framework of null designs and show how it gives bounds on $R(f)$.  We first focus on squarefree polynomials and prove
Theorems~\ref{thm:main3} and \ref{thm:main1}.  More general polynomials are then briefly considered, including a proof of Theorem~\ref{thm:main62}.  In Section~\ref{sec:counterex}, we show that in many cases the bounds on $R(f)$ are tight by exhibiting counterexamples to weakening the hypotheses for smaller values of $n$.  We conclude with a discussion of some open problems and possible next directions.

\section{Upper bounds on $R(f)$}

\subsection{Null designs}

For a subset $A \subseteq [n]$, let $m_A:=\prod_{i \in A} x_i$, the squarefree monomial using variables indexed by $A$.
We say that an element $\nu \in \Z[S_n]$ is a \emph{null} $t$-\emph{design} if $\nu(m_A)=0$ for every $A \subseteq [n]$ with $|A|\le t$.  In particular, $\nu$ is a null $0$-design if and only if its sum of coefficients equals zero.  A related but slightly different notion of `permutation design' was introduced by Godsil in \cite{Godsil} and also studied more recently in
in \cite{SLS,Sole}.  

One potentially helpful way to think of a null $t$-design $\nu = \sum_{\sigma \in S_n} c_\sigma \sigma$
is as a $(\sum_\sigma |c_\sigma|) \times n$ array divided vertically into `positive' and `negative' sections, where $\sigma(1),\dots,\sigma(n)$ appears as a row exactly $|c_\sigma|$ times in the appropriate section. The $t$-design property is then checked as follows.  For any selection of $t$ or fewer columns, the number of times any subset appears as a positive row in those columns is the same as the number of times it appears as a negative row in those columns.

\begin{ex}
\label{ex:alt-null-design}
For any $n \ge 3$, $\perm{()}+\perm{(123)}+\perm{(132)}-\perm{(12)}-\perm{(13)}-\perm{(23)}$ is a null $3$-design in $\Z[S_3]$.  In fact, it is a null $n$-design in $S_n$ for any $n \ge 3$.  We give the associated array in the case $n=4$ for illustration in Table~\ref{tab:n4t4}.  In (say) columns $1$ and $3$, each of the sets $\{1,2\}, \{1,3\}, \{2,3\}$ occurs once in the upper half and once in the lower half.  A similar property holds for any selection of columns.
\end{ex}

\begin{table}[htbp]
\parbox{.49\linewidth}{
\centering
\captionsetup{width=.9\linewidth}
\begin{tabular}{lccccc}
% & {1} & {2} & {3} & {4} \\
\hline
$\perm{()}$ & even & {1} & {2} & {3} & {4} \\ 
$\perm{(123)}$ & even & {2} & {3} & {1} & {4} \\ 
$\perm{(132)}$ & even & {3} & {1} & {2} & {4} \\ 
\hline
$\perm{(12)}$ & odd  & {2} & {1} & {3} & {4} \\
$\perm{(13)}$ & odd & {3} & {2} & {1} & {4} \\ 
$\perm{(23)}$ & odd & {1} & {3} & {2} & {4} \\ 
\hline
\end{tabular}
\caption{A null $4$-design in $\Z[S_4]$.}
\label{tab:n4t4}
}
\hfill
\parbox{.49\linewidth}{
\centering
\captionsetup{width=.9\linewidth}
\begin{tabular}{lccccc}
\hline
$\perm{()}$ & even & {1} & {2} & {3} & {4} \\ 
$\perm{(123)}$ & even & {2} & {3} & {1} & {4} \\ 
$\perm{(1342)}$ & odd & {3} & {1} & {4} & {2} \\ 
\hline
$\perm{(12)}$ & odd & {2} & {1} & {3} & {4} \\
$\perm{(13)}$ & odd & {3} & {2} & {1} & {4} \\ 
$\perm{(234)}$ & even & {1} & {3} & {4} & {2} \\ 
\hline
\end{tabular}
\caption{A null $1$-design in $\Z[S_4]$.}
\label{tab:n4t1}
}
\end{table}

More generally, for $n \ge k \ge 3$, the group ring element $\sum_{\sigma \in S_k} \mathrm{sgn}(\sigma) \sigma$ is a null $n$-design.  Given any two subsets $A,B \subseteq [k]$ with $|A|=|B|$, let $\Pi_{A,B}=\{\sigma \in S_k : \sigma(A)=B\}$.  We claim that $\Pi_{A,B}$ contains equally many even and odd permutations.  Consider the case where $|[k] \cap A| \ge 2$. Without loss of generality, say $\{1,2\} \subseteq A$. Then a one-to-one correspondence between even and odd permutations in $\Pi_{A,B}$ arises from pre-multiplying by $\perm{(12)}$.  A similar method can be applied if $|[k] \setminus A| \ge 2$, choosing a transposition in $S_k$ disjoint from $A$.   Finally, one of these inequalities holds because $k \ge 3$.

We now return to considering the balancing index over the alternating group. 
The following result uses null $t$-designs to bound the quantity $R(f)=\bal(f,A_n)/\bal(f)$ when $f$ is squarefree of degree at most $t$.

\begin{prop}
\label{prop:sqfree}
Suppose $\nu$ is a null $t$-design in $\Z[S_n]$.  
Let $\nu_0$ be the projection of $\nu$ onto $\Z[A_n]$.
Then for every squarefree polynomial $f$ in at least $n$ variables with degree at most $t$, we have $R(f) \mid \nu_0(1)$.
\end{prop}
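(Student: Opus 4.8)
The plan is to exploit the defining property of $\bal(f,A_n)$ together with the existence of the null design $\nu$ in $\Z[S_n]$. Fix a squarefree polynomial $f$ with at most $n$ variables and $\deg(f) \le t$. By definition of $\bal(f)$, there is some $\alpha \in \Z[S_n]$ with $\alpha(f)$ symmetric and $\alpha(1) = \bal(f)$; more precisely, any $\Z$-linear combination achieving $\bal(f)$ works, but it is cleanest to pick a single such $\alpha$ and argue divisibility. The goal is to manufacture, from $\alpha$ and $\nu$, an element $\beta \in \Z[A_n]$ with $\beta(f)$ symmetric and $\beta(1)$ equal to $\bal(f)\cdot \nu_0(1)$; this forces $\bal(f,A_n) \mid \bal(f)\,\nu_0(1)$, i.e. $R(f) \mid \nu_0(1)$, as desired.

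First I would observe how $\nu$ interacts with a squarefree polynomial $f = \sum_A c_A m_A$ of degree at most $t$: since $\nu$ is a null $t$-design, $\nu(m_A) = 0$ for every monomial $m_A$ appearing in $f$, and hence $\nu(f) = 0$ by linearity. Next, write $\nu = \nu_0 + \nu_1$, where $\nu_0 \in \Z[A_n]$ is the even part and $\nu_1$ the odd part; then $\nu_0(f) = -\nu_1(f)$. Now consider the element $\beta := \nu_0\,\alpha - \nu_1\,\alpha$ — wait, more carefully: I want to combine the ``symmetrizing'' element $\alpha$ with $\nu_0$ in a way that lands in $\Z[A_n]$. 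The key point is that $\nu_1 \cdot (\text{transposition})$ lies in $\Z[A_n]$, so multiplying $\nu_1$ on the right by any fixed odd permutation $\tau$ gives an even-supported element. The construction I would use is $\beta = \nu_0 \cdot \alpha + \nu_1 \cdot \tau \alpha'$ for appropriate choices, engineered so that $\beta(f) = \nu_0(\alpha(f)) + \nu_1(\tau(\alpha'(f)))$ is symmetric (using that $\alpha(f)$ and $\alpha'(f)$ are symmetric, and that any group-ring element applied to a symmetric polynomial returns a scalar multiple of it — specifically $\sigma(g) = g$ for symmetric $g$, so $\gamma(g) = \gamma(1)\,g$).

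This last observation is the crux and simplifies everything: if $g$ is symmetric then $\gamma(g) = \gamma(1)\,g$ for every $\gamma \in \Z[S_n]$. So I would take $\alpha$ with $\alpha(f) = g$ symmetric and $\alpha(1) = \bal(f)$, and set $\beta = \nu\,\alpha$ suitably projected: actually the honest route is $\beta := \nu_0\,\alpha - \nu_1\,\alpha$. But $\nu_1\,\alpha \notin \Z[A_n]$ in general, so I instead use that $\nu(f) = 0$ to write $\nu_0(f) = -\nu_1(f)$, pick a transposition $\tau$, note $\tau\nu_1 \in \Z[A_n]$ and $\tau\nu_1(f) = \tau(-\nu_0(f)) = -\nu_0(f)$ since $\nu_0(f)$ need not be symmetric — hmm, this is where care is needed. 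The clean fix: consider $\beta = \nu_0 \cdot \alpha$, which lies in $\Z[A_n]$ with $\beta(1) = \nu_0(1)\,\bal(f)$; and $\beta(f) = \nu_0(\alpha(f))$? No — $\beta(f) = (\nu_0\alpha)(f) = \nu_0(\alpha(f)) = \nu_0(g) = \nu_0(1)\,g$, which IS symmetric. That's it. So $\beta \in \Z[A_n]$, $\beta(f)$ symmetric, $\beta(1) = \nu_0(1)\bal(f)$, giving $\bal(f,A_n) \mid \nu_0(1)\bal(f)$ and hence $R(f) \mid \nu_0(1)$.

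The main obstacle I anticipate is purely bookkeeping: verifying that $\nu \cdot \alpha$ respects the even/odd decomposition correctly, i.e. that $(\nu\alpha)_0 = \nu_0\,\alpha_e + \nu_1\,\alpha_o$ where $\alpha = \alpha_e + \alpha_o$ — but since $\alpha$ itself may have odd support, one must either first reduce to the case where the optimal $\alpha$ can be taken in a convenient form, or argue the divisibility $R(f)\mid \nu_0(1)$ directly via gcd manipulations rather than producing one witness $\beta$. I would handle this by working at the level of gcd's: for each $\alpha$ with $\alpha(f)$ symmetric, $\nu_0\alpha \in \Z[A_n]$ has $(\nu_0\alpha)(f) = \nu_0(1)\alpha(f)$ symmetric and sum of coefficients $\nu_0(1)\alpha(1)$, so $\bal(f,A_n) \mid \nu_0(1)\alpha(1)$ for all such $\alpha$; taking the gcd over $\alpha$ yields $\bal(f,A_n)\mid \nu_0(1)\bal(f)$, which is the claim.
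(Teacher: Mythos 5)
There is a genuine gap at the crux of your argument: the element $\nu_0\alpha$ does \emph{not} lie in $\Z[A_n]$ unless $\alpha$ itself lies in $\Z[A_n]$. Since $\nu_0$ is supported on even permutations, any odd permutation in the support of $\alpha$ contributes odd permutations to $\nu_0\alpha$, and the whole point of the proposition is that the optimal $\alpha$ typically must use odd permutations. A quick sanity check confirms the argument cannot be right as written: your final paragraph never actually uses the null-design property of $\nu$ (only that $\gamma(g)=\gamma(1)g$ for symmetric $g$, which holds for every $\gamma$), so the same reasoning with $\nu_0=\perm{()}$ would give $R(f)=1$ for all squarefree $f$, contradicting the paper's $C_5$ example where $R(f)=3$. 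You were circling the correct ingredients earlier --- the identity $\nu_0(g)=-\nu_1(g)$ for squarefree $g$ of degree at most $t$ --- but abandoned them for the ``clean fix,'' which is where the error enters.

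The repair, which is the paper's proof, is to decompose $\alpha=\alpha_0+\alpha_1$ into its even and odd parts and take
$\beta=\nu_0\alpha_0-\nu_1\alpha_1$. This \emph{does} lie in $\Z[A_n]$, since even$\cdot$even and odd$\cdot$odd are both even. For each odd $\sigma$ in the support of $\alpha_1$, the polynomial $\sigma(f)$ is again squarefree of degree at most $t$, so the null-design property gives $\nu_1(\sigma(f))=-\nu_0(\sigma(f))$; substituting this term by term converts $\beta(f)$ into $\nu_0(\alpha(f))=\nu_0(1)\alpha(f)$, which is symmetric. Finally $\nu(1)=0$ gives $\nu_1(1)=-\nu_0(1)$, so $\beta(1)=\nu_0(1)\alpha_0(1)-\nu_1(1)\alpha_1(1)=\nu_0(1)\alpha(1)$, and taking the gcd over all admissible $\alpha$ (as in your last paragraph, which is fine in spirit) yields $\bal(f,A_n)\mid\nu_0(1)\,\bal(f)$.
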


\begin{proof}
Suppose $\alpha \in \Z[S_n]$ is such that $\alpha(f)$ is symmetric.
Write $\alpha=\sum_{\sigma \in S_n} a_\sigma \sigma$.
Decompose $\alpha=\alpha_0+\alpha_1$ with $\alpha_0 \in \Z[A_n]$ and 
$\alpha_1 \in \Z[S_n \setminus A_n]$.  We similarly decompose $\nu=\nu_0+\nu_1$.
Consider the combination $\beta = \nu_0 \alpha_0 - \nu_1 \alpha_1 \in \Z[A_n]$.
We compute
\begin{align*}
\beta(f)
%=(\nu_0 \alpha_0 - \nu_1 \alpha_1)(f)
&=
\sum_{\sigma \in A_n} a_\sigma \nu_0 \sigma(f) -
\sum_{\sigma \in S_n \setminus A_n} a_\sigma \nu_1 \sigma(f)\\
&=
\sum_{\sigma \in A_n} a_\sigma \nu_0 \sigma(f) +
\sum_{\sigma \in S_n \setminus A_n} a_\sigma \nu_0 \sigma(f)\\
&=
\nu_0 \alpha(f) = \nu_0(1) \alpha(f),
\end{align*}
where the first equality uses linearity and the definition of $\beta$,
the second uses that $\nu_0(g)=-\nu_1(g)$ for polynomials $g$ of degree at most $t$, and the last equality follows from $\alpha(f)$
being symmetric.
\end{proof}

Theorem~\ref{thm:main3} now follows as a direct consequence of Example~\ref{ex:alt-null-design} and Proposition~\ref{prop:sqfree}.  That is, $\bal(f,A_n)$ equals either $\bal(f)$ or $3\, \bal(f)$ for any squarefree polynomial in at least three variables.

\begin{ex}
\label{ex:null-1-design}
A null $1$-design is given by $\nu=\perm{()}+\perm{(123)}+\perm{(1342)}-\perm{(12)}-\perm{(13)}-\perm{(234)}$ and displayed as an array in Table~\ref{tab:n4t1}.  For each column, the upper and lower sections agree (as multisets).  Compared with the null design of Example~\ref{ex:alt-null-design}, this one is noteworthy for having mixed signs on even and odd permutations, resulting in $\nu_0(1)=1$.
\end{ex}

Suppose $f \in \Z[x_1,\dots,x_n]$ is a linear polynomial or, more generally, has at most one variable per term.  It is easy to see that $R(f)=1$ when $n=3$.  The null $1$-design of Example~\ref{ex:null-1-design} shows that this also holds for all $n \ge 4$.
In what follows, we use various null (ordered) designs to obtain upper bounds on $R(f)$ in other cases.

\subsection{Squarefree polynomials}

As already shown, Theorem~\ref{thm:main3} is an easy consequence of the null design $\perm{()}+\perm{(123)}+\perm{(132)}-\perm{(12)}-\perm{(13)}-\perm{(23)}$.  Theorem~\ref{thm:main1} improves the conclusion to $R(f)=1$ under the extra assumption $n \ge 2t+2$ when $\deg(f) \le t$.  To prove this, a result on products of null designs is helpful.  

For $\alpha =\sum_\sigma c_\sigma \sigma \in \Z[S_n]$, define its \emph{support} as
$\mathrm{supp}(\alpha) = \bigcup \{\{x:\sigma(x) \neq x\} : \sigma \in S_n \text{ with } c_\sigma \neq 0\}$.
If $\nu$ is a null $t$-design in $\Z[S_n]$, then $\nu (m_A) = 0$ for any $A \subseteq [n]$ with $|A \cap \mathrm{supp}(\nu)| \le t$.  We have the following observation on products of null designs with disjoint supports.

\begin{lemma}
\label{lem:product}
Suppose $\mu$ is a null $s$-design in $\Z[S_n]$, $\nu$ is a null $t$-design in $\Z[S_n]$, and $\mu$ and $\nu$ have disjoint supports.  Then $\mu \nu$ is a null $(s+t+1)$-design in $\Z[S_n]$.
\end{lemma}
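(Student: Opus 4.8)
The plan is to exploit that $\mu\nu$ acts on polynomials by composition: since $(\sigma\tau)(g)=\sigma(\tau(g))$ for $\sigma,\tau\in S_n$ and $g\in\Z[x_1,\dots,x_n]$, bilinearity gives $(\mu\nu)(g)=\mu(\nu(g))$. Write $S=\mathrm{supp}(\mu)$ and $T=\mathrm{supp}(\nu)$, so that $S\cap T=\emptyset$ by hypothesis. The first thing I would record is the elementary observation that every $\tau$ in the support of $\nu$ fixes $[n]\setminus T$ pointwise, hence permutes $T$ within itself; consequently, for any $B\subseteq[n]$ one has $\nu(m_B)=m_{B\setminus T}\cdot\nu(m_{B\cap T})$, and moreover $\nu(m_{B\cap T})$ is a $\Z$-linear combination of squarefree monomials $m_C$ with $C\subseteq T$ and $|C|=|B\cap T|$. (This is exactly the reasoning already used to justify the remark that a null $t$-design annihilates $m_A$ whenever $|A\cap\mathrm{supp}(\nu)|\le t$.)

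Next I would fix a set $A\subseteq[n]$ with $|A|\le s+t+1$ and aim to show $(\mu\nu)(m_A)=\mu(\nu(m_A))=0$, splitting into two cases according to the size of $A\cap T$. If $|A\cap T|\le t$, then $\nu(m_{A\cap T})=0$ because $\nu$ is a null $t$-design, so already $\nu(m_A)=0$ and there is nothing more to do.

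The remaining case is $|A\cap T|\ge t+1$, and here the counting hypothesis enters: $|A\setminus T|\le|A|-(t+1)\le s$. Using the observation above, write $\nu(m_A)=m_{A\setminus T}\sum_j d_j m_{C_j}$ with each $C_j\subseteq T$. Since $A\setminus T\subseteq[n]\setminus T$ is disjoint from every $C_j\subseteq T$, we have $m_{A\setminus T}m_{C_j}=m_{(A\setminus T)\cup C_j}$, so $\mu(\nu(m_A))=\sum_j d_j\,\mu\big(m_{(A\setminus T)\cup C_j}\big)$. Finally, because $C_j\subseteq T$ and $S\cap T=\emptyset$, the intersection $\big((A\setminus T)\cup C_j\big)\cap S$ equals $(A\setminus T)\cap S$, which has size at most $|A\setminus T|\le s$; hence each $\mu\big(m_{(A\setminus T)\cup C_j}\big)=0$ because $\mu$ is a null $s$-design, and therefore $(\mu\nu)(m_A)=0$, as required.

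The argument is essentially bookkeeping, and I do not anticipate a genuine obstacle; the only point requiring care is the second case, where the bound $|A|\le s+t+1$ is used precisely once — to ensure that as soon as $A$ meets $T$ in more than $t$ coordinates, the part of $A$ outside $T$ involves at most $s$ coordinates and so is annihilated by $\mu$. Disjointness of the supports is what makes the two halves of $A$ decouple: $\nu$ fully resolves the $T$-part into monomials supported inside $T$, none of which touch the coordinates that $\mu$ acts on.
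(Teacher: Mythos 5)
Your proof is correct and follows essentially the same route as the paper: both rest on the pigeonhole observation that a set $A$ with $|A|\le s+t+1$ cannot simultaneously meet the disjoint supports of $\mu$ and $\nu$ in more than $s$ and more than $t$ points, combined with the fact that a null design annihilates any squarefree monomial meeting its support in sufficiently few coordinates. The only difference is cosmetic: where you handle the case $|A\cap\mathrm{supp}(\nu)|\ge t+1$ by tracking the monomials that $\nu$ produces and showing $\mu$ kills each one, the paper shortcuts this by noting that disjoint supports give $\mu\nu=\nu\mu$ and applying $\mu$ first.
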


\begin{proof}
Consider any $(s+t+1)$-subset $A \subseteq [n]$.  Then either $|A \cap \mathrm{supp}(\mu)| \le s$ or $|A \cap \mathrm{supp}(\nu)| \le t$.  It follows that $\mu \nu(m_A) = \nu \mu(m_A) = 0$, and hence $\mu \nu$ is a null $(s+t+1)$-design in $\Z[S_n]$.
\end{proof}

Let $\mu=\perm{()}-\perm{(12)}$.  By a minor abuse of notation, let $\mu^{t+1}$ denote a product of $t+1$ copies of $\mu$ with mutually disjoint supports in $\Z[S_n]$, $n \ge 2t+2$. Since $\mu$ is a null $0$-design, applying Lemma~\ref{lem:product} $t$ times implies that $\mu^{t+1}$ is a null $t$-design in $\Z[S_n]$.  This is closely related to the notion of a \textit{pod} in \cite{GJ}.  An example for $t=2$ is
$$\mu^3=\perm{()}-\perm{(12)}-\perm{(34)}-\perm{(56)}+\perm{(12)(34)}+\perm{(12)(56)}+\perm{(34)(56)}-\perm{(12)(34)(56)}.$$
Everything is now in place to prove Theorem~\ref{thm:main1}.

\begin{proof}[Proof of Theorem~\ref{thm:main1}]
Suppose $f$ is squarefree with $\deg(f) \le t$.  For $n \ge 2t+2$, $\mu^{t+1}$ as defined above is a null $t$-design in $\Z[S_n]$. 
We have $(\mu^{t+1})_0 = \sum_{i \ge 0} \binom{t+1}{2i} = 2^t$.
By Proposition~\ref{prop:sqfree}, $R(f) \mid 2^t$.  But from Theorem~\ref{thm:main3} we also have $R(f) \mid 3$.  It follows that $R(f)=1$.
\end{proof}

For the sake of interest, we exhibit in Table~\ref{tab:n6t2} a certain null $2$-design in $\Z[S_6]$ found with the help of a computer.  Note that the net coefficient total on even permutations is $4-3=1$.  This gives a slightly more direct verification that $R(f)=1$ for squarefree quadratics $f$ in at least $6$ variables.  The quadratic case applies to graphs of order $n$, and, more generally, to symmetric $n \times n$ matrices.  Informally, these objects can all be balanced using only the alternating group for $n \ge 6$.

\begin{table}
\begin{tabular}{lccccccc}
\hline
$\perm{()}$ & even & {1} & {2} & {3} & {4} & {5} & {6} \\
$\perm{(1234)}$ & odd & {2} & {3} & {4} & {1} & {5} & {6} \\
$\perm{(12)(345)}$ & odd & {2} & {1} & {4} & {5} & {3} & {6} \\
$\perm{(123)(45)}$ & odd & {2} & {3} & {1} & {5} & {4} & {6} \\
$\perm{(12435)}$ & even  & {2} & {4} & {5} & {3} & {1} & {6} \\
$\perm{(34)(56)}$ & even  & {1} & {2} & {4} & {3} & {6} & {5} \\
$\perm{(12)(56)}$ & even & {2} & {1} & {3} & {4} & {6} & {5} \\
\hline
$\perm{(123)}$ & even & {2} & {3} & {1} & {4} & {5} & {6} \\
$\perm{(12345)}$ & even & {2} & {3} & {4} & {5} & {1} & {6} \\
$\perm{(124)(35)}$ & odd & {2} & {4} & {5} & {1} & {3} & {6} \\
$\perm{(34)}$ & odd & {1} & {2} & {4} & {3} & {5} & {6} \\
$\perm{(12)(45)}$ & even & {2} & {1} & {3} & {5} & {4} & {6} \\
$\perm{(56)}$ & odd & {1} & {2} & {3} & {4} & {6} & {5} \\
$\perm{(12)(34)(56)}$ & odd & {2} & {1} & {4} & {3} & {6} & {5} \\
\hline
\end{tabular}
\caption{A null $2$-design in $\Z[S_6]$.} %; coefficients are $+1/-1$ in upper/lower sections.}
\label{tab:n6t2}
\end{table}

\subsection{General polynomials}

To handle more general polynomials, it is useful to have an ordered variant of null designs.  
For $A \subseteq [n]$, let $m_A^*:=\prod_{i \in A} x_i^i$. The specific values of the exponents are not important, so long as they are distinct on each variable.  (This is essentially the same as taking a product of distinct non-commuting variables with indices in $A$.)
We shall call $\nu \in \Z[S_n]$ a \emph{null ordered} $t$-\emph{design} if $\nu(m_A^*)=0$ for every $A \subseteq [n]$ with $|A|\le t$.  In the array representation, we now need the stronger property that in any selection of $t$ or fewer columns, the number of times any word (that is, a partial permutation) appears as a positive row in those columns is the same as the number of times it appears as a negative row in those columns.

We state below an ordered version of Proposition~\ref{prop:sqfree}.  The proof is nearly identical, with the key point again being that $\nu_0(g)=-\nu_1(g)$ when $g$ is a polynomial having at most $t$ variables per term.

\begin{prop}
\label{prop:ordered} 
Suppose $\nu$ is a null ordered $t$-design in $\Z[S_n]$.  
Let $\nu_0$ be the projection of $\nu$ onto $\Z[A_n]$.
Then for every polynomial $f \in \Z[x_1,\dots,x_n]$ with at most $t$ variables in every term, we have $R(f) \mid \nu_0(1)$.
\end{prop}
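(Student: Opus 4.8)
The plan is to imitate the proof of Proposition~\ref{prop:sqfree} essentially verbatim, with only the monomials changed from squarefree $m_A$ to the ordered monomials $m_A^*$. So suppose $\alpha = \sum_{\sigma \in S_n} a_\sigma \sigma \in \Z[S_n]$ is such that $\alpha(f)$ is symmetric, and decompose $\alpha = \alpha_0 + \alpha_1$ with $\alpha_0 \in \Z[A_n]$ and $\alpha_1 \in \Z[S_n \setminus A_n]$, and similarly $\nu = \nu_0 + \nu_1$. I would again set $\beta = \nu_0 \alpha_0 - \nu_1 \alpha_1 \in \Z[A_n]$ and compute $\beta(f)$ by the same three-line chain of equalities: first use linearity and the definition of $\beta$ to write $\beta(f) = \sum_{\sigma \in A_n} a_\sigma \nu_0 \sigma(f) - \sum_{\sigma \in S_n \setminus A_n} a_\sigma \nu_1 \sigma(f)$; then replace $-\nu_1$ by $\nu_0$ in the second sum; then recombine to get $\nu_0 \alpha(f) = \nu_0(1)\alpha(f)$, the last step using symmetry of $\alpha(f)$.

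The one step that genuinely needs the ordered hypothesis rather than the squarefree one is the middle equality, namely that $\nu_0(g) = -\nu_1(g)$ for every $g$ that has at most $t$ variables per term. This is exactly where $\nu$ being a \emph{null ordered} $t$-design does the work: since $\nu = \nu_0 + \nu_1$ is a null ordered $t$-design, $\nu(m_A^*) = 0$ for every $A$ with $|A| \le t$, hence $\nu_0(m_A^*) = -\nu_1(m_A^*)$ for all such $A$. Now any monomial in $g$ with at most $t$ distinct variables has the form $c \prod_{i \in A} x_i^{e_i}$ for some $A$ with $|A| \le t$ and positive exponents $e_i$; I would observe that the group action permutes the variable indices, so the identity $\nu_0(\prod_{i\in A} x_i^{e_i}) = -\nu_1(\prod_{i\in A} x_i^{e_i})$ follows from the case $e_i = i$ (i.e.\ from $m_A^*$) because both sides only depend, under the $S_n$-action, on which set of distinct indices is used, not on the particular choice of distinct exponents — this is precisely the parenthetical remark in the definition that ``the specific values of the exponents are not important.'' Summing over the monomials of $g$ by linearity then gives $\nu_0(g) = -\nu_1(g)$.

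Concretely, a clean way to phrase the exponent-independence is: the monomial $\prod_{i\in A} x_i^{e_i}$ with distinct $e_i$ is the image of $m_A^*$ under the ring endomorphism $x_j \mapsto x_j$ followed by a relabelling, but more simply one just notes that a null ordered $t$-design satisfies $\nu(\prod_{i\in A} x_i^{e_i}) = 0$ for \emph{any} tuple of distinct positive exponents by exactly the argument used to establish the $m_A^*$ case (the array-representation criterion only sees which partial permutation occupies the chosen columns). I would state this as a one-sentence remark and then the proof is complete: $R(f)$, which divides $\bal(f, A_n)/\bal(f)$, divides $\beta(1) = \nu_0(1)$ since $\beta \in \Z[A_n]$ and $\beta(f) = \nu_0(1)\alpha(f)$ is symmetric, and $\alpha(1)$ was an arbitrary witness for $\bal(f)$. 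I do not anticipate a real obstacle here; the only thing to be careful about is making the exponent-independence explicit rather than leaving it implicit, since that is the sole substantive difference from Proposition~\ref{prop:sqfree}.
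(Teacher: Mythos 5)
Your proposal is correct and matches the paper, which gives no separate proof of Proposition~\ref{prop:ordered} but simply remarks that the argument of Proposition~\ref{prop:sqfree} carries over with the key point being $\nu_0(g)=-\nu_1(g)$ for $g$ having at most $t$ variables per term --- exactly the point you isolate and justify via the exponent-independence in the definition of $m_A^*$. One small caution: the exponents in a monomial of $g$ need not be distinct (e.g.\ $x_1^2x_2^2$), but your own observation that the ordered condition annihilates each partial-permutation class already covers this, since the coefficient of any target monomial under $\nu$ is a sum over a union of such classes.
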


Lemma~\ref{lem:product} on products still applies in the ordered setting.  Any null $0$-design or $1$-design is trivially ordered, so these can be used as inputs.  In particular, if $\nu$ is a null $1$-design in $\Z[S_{n-2}]$, and $\tau$ denotes the transposition $(n-1,n)$, then $\nu^*:=\nu(1-\tau)$ is a null ordered $2$-design in $\Z[S_n]$.  We have 
\begin{equation}
\label{eq:doubling-nu0}
\nu^*_0(1)=\nu_0(1)-\nu_1(1)=2\nu_0(1).
\end{equation}

The following examples give two special cases.

\begin{ex}
The null $3$-design $\nu=\perm{()}+\perm{(123)}+\perm{(132)}-\perm{(12)}-\perm{(13)}-\perm{(23)}$ of Example~\ref{ex:alt-null-design} is also a null ordered $1$-design in $\Z[S_3]$.  With $\tau=\perm{(45)}$, we obtain a null ordered $2$-design $\nu^*=\nu(1-\tau)$ in $\Z[S_5]$.  From \eqref{eq:doubling-nu0}, this has $\nu^*_0(1)=6$.
\end{ex}

\begin{ex}
If we take the null (ordered) $1$-design $\nu$ of Example~\ref{ex:null-1-design} and $\tau=\perm{(56)}$, we obtain a null ordered $2$-design $\nu^*=\nu(1-\tau)$ in $\Z[S_6]$.  From \eqref{eq:doubling-nu0}, this has $\nu^*_0(1)=2$.
\end{ex}

The proof of Theorem~\ref{thm:main62} now follows directly from Proposition~\ref{prop:ordered} and these two examples.  More generally, using $\mu^{t+1}$ from the squarefree case, we can conclude $R(f) \mid 2^t$ for polynomials $f$ of degree $t$, provided $n \ge 2t+2$.  The power of $2$ can be lowered somewhat by using $\left \lfloor \frac{t-1}{2} \right\rfloor$ disjoint copies of $\nu$ from Example~\ref{ex:null-1-design}.  We are unsure whether such a bound is close to the truth for general $t$.  Some examples are given in the next section that push against this bound for $t=2$.

\section{Counterexamples for small $n$}
\label{sec:counterex}

We next give counterexamples to weakening the bound $n \ge 2 \deg(f)+2$ of Theorem~\ref{thm:main1}.  That is, the extra factor of three in the alternating balancing index is necessary for small $n$, at least in certain cases.  We first consider squarefree polynomials through graphs and hypergraphs.

\begin{ex}
Let $G=C_5$, the cycle on 5 vertices.  It is clear that $\bal(G)=2$; if the cycle follows the vertices in the natural order $1,2,3,4,5$, then applying $\perm{()}+\perm{(2354)}$ produces $K_5$ as a sum of two complementary $5$-cycles. 
We also have $\bal(G,A_5) \mid 6$ via 
\begin{equation}
\label{eq:balC5}
\perm{()}+\perm{(234)}+\perm{(243)}+\perm{(345)}+\perm{(354)}+\perm{(23)(45)} \in \Z[A_5].
\end{equation}
We can show that $\bal(G, A_5) \neq 2$ as follows.  With the same vertex labelling as above, call an edge of $K_5$ `short' or `long' according to whether it is in $G$ or $\overline{G}$, respectively.  For each $\sigma \in A_5$, it turns out that $\sigma(G)$ has exactly zero or three long edges.  Suppose $\alpha \in \Z[A_5]$ satisfies $\alpha(G)=\lambda K_5$ for some positive integer $\lambda$.  Since $K_5$ has five long edges and $\gcd(3,5)=1$, it follows that $3\mid \lambda$ and hence $3 \mid \bal(G)$.
\end{ex}

For $n=4$, an example with similar properties is $G=P_4$, the path on $4$ vertices.  It is self-complementary, so $\bal(P_4)=2$.  However, it can be checked that $\bal(P_4,A_4)=6$.

Here is an example for the case of squarefree cubics.

\begin{ex}
Let $H$ be the hypergraph on $6$ vertices with edge set 
$$\{\{1, 2, 6\}, \{2, 3, 6\}, \{3, 4, 6\}, \{4, 5, 6\}, \{1, 5, 6\}, \{1, 2, 4\}, \{2, 3, 5\}, \{1, 3, 4\}, \{2, 4, 5\}, \{1, 3, 5\}\}.$$
This forms the block set of a $(6,3,2)$-BIBD, and is invariant under the dihedral group generated by the rotation $\perm{(12345)}$ and reflection $\perm{(25)(34)}$.  It is easy to see that $\bal(H)=2$ via $\perm{()} + \perm{(14)(25)(36)}$.  
We can show similarly to the previous example that $\bal(H, A_6)=6$. The same group ring element \eqref{eq:balC5} shows $\bal(H,A_6) \mid 6$.   To show equality, we first partition the possible 3-subsets of $\{1,\dots,6\}$ into four types according to their triangular shapes.  Figure~\ref{fig:triangle-types} shows these shapes; the middle vertex is $6$ and the outer pentagon is has labels $1,\dots,5$, in order.
\begin{figure}[htbp]
\begin{tikzpicture}
\begin{scope}
	\node [style=none] at (0,-1.3) {A};
	% vertices
	\node [style=vertex] (0) at (0,0) {};
	\node [style=vertex] (1) at (90:1) {};
	\node [style=vertex] (2) at (162:1) {};
	\node [style=vertex] (3) at (234:1) {};
	\node [style=vertex] (4) at (306:1) {};
	\node [style=vertex] (5) at (18:1) {};
	% edges
	\draw [style=edge] (1) to (5);
	\draw [style=edge] (5) to (0);
	\draw [style=edge] (0) to (1);
\end{scope}
\begin{scope}[xshift=4cm]
	\node [style=none] at (0,-1.3) {B};
	% vertices
	\node [style=vertex] (0) at (0,0) {};
	\node [style=vertex] (1) at (90:1) {};
	\node [style=vertex] (2) at (162:1) {};
	\node [style=vertex] (3) at (234:1) {};
	\node [style=vertex] (4) at (306:1) {};
	\node [style=vertex] (5) at (18:1) {};
	% edges
	\draw [style=edge] (1) to (4);
	\draw [style=edge] (4) to (0);
	\draw [style=edge] (0) to (1);
\end{scope}
\begin{scope}[xshift=8cm]
	\node [style=none] at (0,-1.3) {C};
	% vertices
	\node [style=vertex] (0) at (0,0) {};
	\node [style=vertex] (1) at (90:1) {};
	\node [style=vertex] (2) at (162:1) {};
	\node [style=vertex] (3) at (234:1) {};
	\node [style=vertex] (4) at (306:1) {};
	\node [style=vertex] (5) at (18:1) {};
	% edges
	\draw [style=edge] (1) to (5);
	\draw [style=edge] (5) to (3);
	\draw [style=edge] (3) to (1);
\end{scope}
\begin{scope}[xshift=12cm]
	\node [style=none] at (0,-1.3) {D};
	% vertices
	\node [style=vertex] (0) at (0,0) {};
	\node [style=vertex] (1) at (90:1) {};
	\node [style=vertex] (2) at (162:1) {};
	\node [style=vertex] (3) at (234:1) {};
	\node [style=vertex] (4) at (306:1) {};
	\node [style=vertex] (5) at (18:1) {};
	% edges
	\draw [style=edge] (1) to (5);
	\draw [style=edge] (5) to (4);
	\draw [style=edge] (4) to (1);
\end{scope}
\end{tikzpicture}
\caption{Examples of the four triangular shapes.}
\label{fig:triangle-types}
\end{figure}
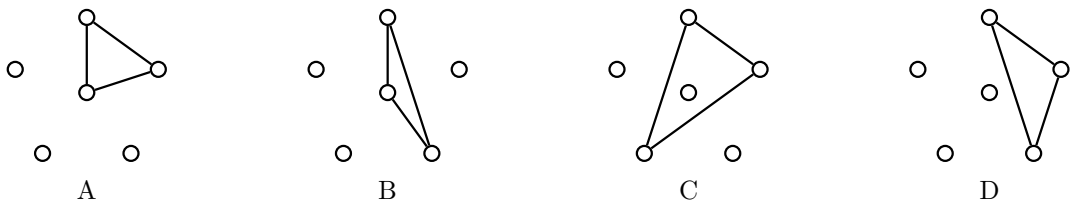

There are five 3-subsets of each type. So, in order to balance $H$, we need $5k$ triangles of each type for some $k \in \mathbb{N}$. A simple check shows that there are two possibilities for $\sigma \in A_6$: either $\sigma(H)$ has five $3$-subsets each of types A and C, or it has two each of types A and C and three each of types B and D.  An integral linear combination of these has a multiple of three triples of types B and D.  Since $\gcd(3,5)=1$, we obtain $3 \mid \bal(H,A_5)$ similar to the 
previous example.%  Therefore, $\bal(H,A_6)=6$. 
\end{ex}

We also found a different $3$-uniform hypergraph $H'$ on $6$ vertices and $12$ edges having $\bal(H')=5$ and $\bal(H',A_6)=15$.

Moving on to the ordered (not necessarily squarefree) setting, we note that the lower bounds on $n$ are necessary in Theorem~\ref{thm:main62}.  For convenience, we display the following examples using digraphs and matrices.

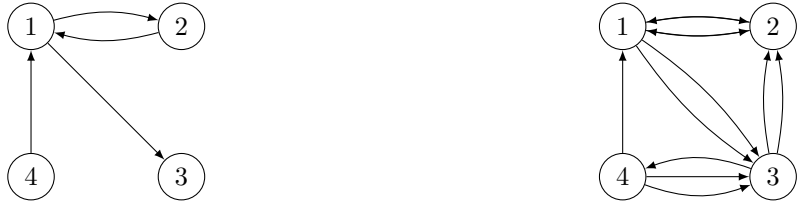
\begin{figure}[htbp]
\begin{tikzpicture}
\tikzset{vertex/.style = {shape=circle,draw,minimum size=1.5em}}
\tikzset{edge/.style = {->,> = latex}}
% vertices
\node[vertex] (a) at  (0,2) {$1$};
\node[vertex] (b) at  (2,2) {$2$};
\node[vertex] (c) at  (2,0) {$3$};
\node[vertex] (d) at  (0,0) {$4$};
%edges
\draw[edge] (a) to[bend left=15] (b);
\draw[edge] (b) to[bend left=15] (a);
\draw[edge] (d) to (a);
\draw[edge] (a) to (c);
\end{tikzpicture}
\hspace{5cm}
\begin{tikzpicture}
\tikzset{vertex/.style = {shape=circle,draw,minimum size=1.5em}}
\tikzset{edge/.style = {->,> = latex}}
% vertices
\node[vertex] (a) at  (0,2) {$1$};
\node[vertex] (b) at  (2,2) {$2$};
\node[vertex] (c) at  (2,0) {$3$};
\node[vertex] (d) at  (0,0) {$4$};
%edges
\draw[edge] (a) to[bend left=10] (b);
\draw[edge] (b) to[bend left=10] (a);
\draw[edge] (a) to[bend right=10] (b);
\draw[edge] (b) to[bend right=10] (a);
\draw[edge] (d) to (a);
\draw[edge] (a) to[bend left=10] (c);
\draw[edge] (a) to[bend right=10] (c);
\draw[edge] (c) to[bend left=10] (b);
\draw[edge] (c) to[bend right=10] (b);
\draw[edge] (d) to (c);
\draw[edge] (c) to[bend right=20] (d);
\draw[edge] (d) to[bend right=20] (c);
\end{tikzpicture}
\caption{Counterexamples to Theorem~\ref{thm:main62} for $n=4$.}
\label{fig:digraphs}
\end{figure}

\begin{ex}
\label{ex:n4-a12-b3}
The directed graph $D$ on the left in Figure~\ref{fig:digraphs} has balancing index $3$ over $S_4$ and balancing index $12$ over $A_4$.  The balancing over $S_4$ can be done via
$$\perm{()}-\perm{(12)}+\perm{(123)}+\perm{(1234)}+\perm{(1342)}-\perm{(134)}+\perm{(14)(23)}.$$
We claim that the $12$ copies of $D$ under $A_4$ are linearly independent over $\Q$. This can be established by applying
$$\perm{()}-\perm{(123)}+\perm{(124)}-\perm{(142)}-\perm{(143)}-\perm{(234)}+\perm{(243)}+\perm{(12)(34)}+\perm{(14)(23)}$$
to $D$, producing a single edge of nonzero weight.  Using $A_4$ we can therefore span the space of all directed weighted graphs on $4$ vertices, which has dimension $12$. It follows that the unique linear combination of the $12$ copies of $D$ under $A_4$ having all equal edge multiplicities is (up to scaling) the one which uses all copies with equal coefficients.  Thus $\bal(D,A_4)=12$.
\end{ex}

\begin{ex}
\label{ex:n4-a12-b1}
The directed multigraph $D'$ on the right in Figure~\ref{fig:digraphs} has balancing index $1$ over $S_4$ and balancing index $12$ over $A_4$. The balancing over $S_4$ can be done via
$$2\perm{()}-3\perm{(12)}-2\perm{(23)}+\perm{(14)}+3\perm{(123)}+3\perm{(1342)}-2\perm{(1423)}+\perm{(1234)}.$$
Verification that $\bal(D',A_4)=12$ can be carried out as in the previous example.
\end{ex}

The factor of two in Theorem~\ref{thm:main62} is still necessary for $n=5$ and $n=6$, as shown in the following examples.  To compute the balancing index values, we used a Smith normal form calculation as described in \cite{dVD}.  

\begin{ex}
\label{ex:n5-a6-b1}
The matrix
$$A=\left[\begin{array}{rrrrr}
 4 & 0 & 1 & 4 & 1 \\
 3 & 8 & 9 & 5 & 3 \\
 4 & 1 & 8 & 5 & 7 \\
 4 & 6 & 0 & 9 & 6 \\
 9 & 2 & 8 & 2 & 1
 \end{array}\right]$$
has balancing index $1$ over $S_5$ and balancing index $6$ over $A_5$.
\end{ex}

\begin{ex}
\label{ex:n6-a60-b30}
The matrix
$$A=\left[\begin{array}{rrrrrr}
4 & 0 & 4 & 0 & 5 & 5\\
0 & 0 & 0 & 2 & 5 & 5\\
4 & 3 & 0 & 3 & 4 & 1\\
0 & 3 & 0 & 0 & 0 & 5\\
5 & 4 & 5 & 5 & 4 & 2\\
2 & 1 & 3 & 1 & 4 & 3
 \end{array}\right]$$
has balancing index $30$ over $S_6$ and balancing index $60$ over $A_6$.
\end{ex}

\section{Conclusion}

For squarefree $f \in \Z[x_1,\dots,x_n]$ of degree $t$, we have seen that $\bal(f,A_n)$ equals either $\bal(f)$ or $3\, \bal(f)$, with the former possibility prevailing for all $f$ when $n \ge 2t+2$.  We have examples where the multiple of $3$ is necessary for $(n,t)=(2,5)$ and $(3,6)$.  We do not know how good the lower bound on $n$ is for larger $t$.  This would be a good question for further study.  
%Focusing on the case $t=2$, a likely easier problem would be classifying the symmetric $4 \times 4$ and $5 \times 5$ integer matrices for which the multiple of $3$ is required.  

If we drop the squarefree assumption, there remain many questions.  For a given degree bound $t$, we have seen that $R(f)$ is at most exponential in $t$ for $n \ge 2t+2$.  We are interested in whether $R(f)$ has a bound independent of $t$ for large $n$, and also whether $R(f)$ can get very large when $1<n/t<2$.  For the latter question, we found a computer-generated polynomial $f$ in five variables, all monomial terms having shape $(3,2,1)$, with $\bal(f)=1$ and $\bal(f,A_n)=60$.  One might think of this as a `rank 3' analog of the directed multigraph in Example~\ref{ex:n4-a12-b1}.

It should be noted that $\bal(f,A_n)$ may equal zero.  Indeed, a monomial such as $f=x_1 x_2^2 \cdots x_n^n$ with distinct exponents cannot be balanced without using all permutations in $S_n$. We do not have a classification of such polynomials.
More generally, given a pair $(n,t)$, one might ask which pairs of integers $(a,b)$ can be achieved as $(\bal(f,A_n),\bal(f))$ for some polynomial $f$ of degree $t$ in $n$ variables.  

Consider $n \times n$ matrices, which from \eqref{eq:matrix-poly} can be modeled as polynomials having one or two variables per term.
Example~\ref{ex:n6-a60-b30} shows that $\bal(f,A_n)$ could equal $2\, \bal(f)$ for $n \le 6$.  We do not know whether this factor of $2$ is needed for larger $n$.

It may be helpful for experimentation to implement a fast algorithm for computing the balancing index of larger matrices over the alternating group, as was done in \cite{dVD} for the symmetric group.  We briefly sketch one potential approach that may work for symmetric matrices.
Let
$$\gamma=\perm{()}+\perm{(23)(45)}-\perm{(234)}-\perm{(243)}+\perm{(24)(35)}-\perm{(12)(45)}+\perm{(12)(34)}-\perm{(12)(35)} \in \Z[A_5].$$
This $\gamma$ has the property that $\gamma(x_1)=x_1-x_2$, $\gamma(x_2)=x_2-x_1$, and $\gamma(x_i)=0$ for $i=3,4,5$.
In other words, $\gamma$ applies the same action as $\perm{()}-\perm{(12)}$ on single variables, and it uses only even permutations.  Taking the product of two copies of $\gamma$ on disjoint sets can then produce the same action on pairs as $\perm{()}-\perm{(12)}-\perm{(34)}+\perm{(12)(34)}$.  This latter combination was central to the algorithm in \cite{dVD}.  In the language of graph theory, this gadget allows for local adjustments on $4$-cycles to reduce the overall variance of edge weights.  Using $\gamma$, it appears that this same algorithm can be carried out using only permutations in $A_n$.

\section*{Acknowledgments}
The authors are grateful to Coen del Valle and Amarpreet Rattan for helpful discussions in early stages of this work.

\end{document}